\newtheorem{Theorem}{\sc Theorem}
\newtheorem{Definition}[Theorem]{\sc Definition}
\newcommand{\R}{{\if mm {\rm I}\mkern -3mu{\rm R}\else \leavevmode
		\hbox{I}\kern -.17em\hbox{R} \fi}}
\def\sqr#1#2{{
		\vcenter{
			\vbox{\hrule height.#2pt
				\hbox{\vrule width.#2pt height#1pt \kern#1pt
					\vrule width.#2pt
				}
				\hrule height.#2pt
			}
		}
}}
\def\real{\mathbb{R}}
\def\lista#1
\begin{document}
\title{
Distributed optimal control problems for a class of elliptic hemivariational inequalities with a parameter and its asymptotic behavior}

\author{
Claudia M. Gariboldi  \footnote{\, Depto. Matem\'atica, FCEFQyN, Universidad Nacional de R\'io Cuarto, Ruta 36 Km 601,
5800 R\'io Cuarto, Argentina. E-mail: cgariboldi@exa.unrc.edu.ar.}
\hspace{.15cm} and \ Domingo A. Tarzia \footnote{\, Depto. Matem\'atica, FCE, Universidad Austral, Paraguay 1950, S2000FZF Rosario, Argentina.} \footnote{\, CONICET, Argentina. E-mail: DTarzia@austral.edu.ar.}
}

\date{}

\maketitle
	
\medskip

\vspace{-.4cm}

Dedicated to Professor Stanislaw Mig\'orski on the occasion of his 60th birthday

\medskip

\vspace{.4cm}

\noindent {\bf Abstract.} \

In this paper, we study optimal control problems on the internal energy for a system governed by a class of elliptic boundary hemivariational inequalities with a parameter. The system has been originated by a steady-state heat conduction problem with non-monotone multivalued subdifferential boundary condition on a portion of the boundary of the domain described by the Clarke generalized gradient of a locally Lipschitz function. We prove an existence result for the optimal controls and we show an asymptotic result for the optimal controls and the system states, when the parameter, like a heat transfer coefficient, tends to infinity on a portion of the boundary.

\medskip
	
\noindent
{\bf Key words.}
Elliptic hemivariational inequality, optimal control problems, asymptotic behavior,
Clarke generalized gradient, mixed elliptic problem, convergence.
	

\medskip
	
\noindent
{\bf 2020 Mathematics Subject Classification. }
35J65, 35J87, 49J20, 49J45.

\medskip

{\thispagestyle{empty}} 


\section{Introduction}

We consider a bounded domain $\Omega$ in $\real^d$ whose
regular boundary $\Gamma $ consists of the union of three disjoint portions $\Gamma_{i}$, $i=1$, $2$, $3$
with $|\Gamma_{i}|>0$, where $|\Gamma_i|$ denotes the $(d-1)$-dimensional
Hausdorff measure of the portion $\Gamma_i$ on $\Gamma$.
The outward normal vector on
the boundary is denoted by $n$.
We formulate the following steady-state heat conduction problem
with mixed boundary conditions ~\cite{AK, BBP, G, LCB, Ta2, Ta3}:
\begin{eqnarray}
&&
-\Delta u=g \ \ \mbox{in} \ \ \Omega,
\ \ \quad u\big|_{\Gamma_{1}}=0,
\ \ \quad-\frac{\partial u}{\partial n}\big|_{\Gamma_{2}}=q,
\ \ \quad u\big|_{\Gamma_{3}}=b,
\label{P}
\end{eqnarray}
where $u$ is the temperature in $\Omega$,
$g$ is the internal energy in $\Omega$,
$b$ is the temperature on $\Gamma_{3}$ and
$q$ is the heat flux on $\Gamma_{2}$,
which satisfy the hypothesis:
$g\in H=L^2(\Omega)$,
$q\in Q=L^2(\Gamma_2)$ and
$b\in H^{\frac{1}{2}}(\Gamma_3)$.

Throughout the paper we use the following notation
\begin{eqnarray*}
&& V=H^{1}(\Omega),
\quad
V_{0}=\{v\in V \mid v = 0 \ \ \mbox{on} \ \ \Gamma_{1} \},
\\[2mm]
&&
K=\{v\in V \mid
v = 0 \ \ \mbox{on} \ \ \Gamma_{1},\
v = b \ \ \mbox{on} \ \ \Gamma_{3} \},
\quad
K_{0}=\{v\in V \mid
v = 0 \ \ \mbox{on} \ \ \Gamma_{1}\cup \Gamma_3 \}, \\[2mm]
&&
a(u,v)=\int_{\Omega }\nabla u \, \nabla v \, dx, \quad L(v)= \int_{\Omega}g v \,dx -
\int_{\Gamma_{2}}q \gamma (v) \,d\Gamma,
\end{eqnarray*}
where $\gamma \colon V \to L^2(\Gamma)$
denotes the trace operator on $\Gamma$.
In what follows, we write $u$ for the trace of a function $u \in V$ on the boundary.
In a standard way, we obtain the following variational formulation of (\ref{P}):
\begin{eqnarray}
&&
\hspace{-1cm}
\mbox{find} \ \ u_{\infty}\in K \ \ \mbox{such that}\ \
a(u_{\infty},v)=L(v)
\ \ \mbox{for all} \ \ v\in K_{0},
\label{Pvariacional}
\end{eqnarray}
The standard norms on $V$ and $V_0$ are denoted by
\begin{eqnarray*}
&&
\| v \|_V = \Big(
\| v \|^2_{L^2(\Omega)}
+ \| \nabla v \|^2_{L^2(\Omega;\real^d)} \Big)^{1/2}
\ \ \mbox{for} \ \ v \in V, \\ [2mm]
&&
\| v \|_{V_0} = \| \nabla v \|_{L^2(\Omega;\real^d)}
\ \ \mbox{for} \ \ v \in V_0.
\end{eqnarray*}
It is well known by the Poincar\'e inequality,
see~\cite{CLM, R, Ta2}, that on $V_0$ the above two norms
are equivalent. Note that the form $a$ is bilinear, symmetric, continuous and coercive with constant $m_a > 0$, i.e.
\begin{equation}\label{coercive}
a(v, v) = \|v\|^{2}_{V_0} \ge m_a \|v\|^{2}_{V}
\ \ \mbox{for all} \ \ v\in V_{0}.
\end{equation}

We remark that, under additional hypotheses on the data
$g$, $q$ and $b$, problem (\ref{P}) can be considered as steady-state two-phase Stefan problem, see, for example,~\cite{GT,TT,Ta, Ta3}. We can particularly see it in ~\cite{GT} (Example 1 in page 629, Example 2 in page 630, and Example 3 in page 631); in ~\cite{TT} (Example (i) and (ii) in psge 35, and Example (iii) in page 36), and in ~\cite{Ta3} (Example 1 and Example 2 in page 180).

Now, in this paper, we consider the mixed nonlinear boundary value problem for an elliptic equation as follows:
\begin{equation}\label{Pjalfa}
-\Delta u=g \ \ \mbox{in} \ \ \Omega,
\ \quad u\big|_{\Gamma _{1}}=0,
\ \quad  -\frac{\partial u}{\partial n}\big|_{\Gamma_{2}}=q,  \ \quad -\frac{\partial u}{\partial n}\big|_{\Gamma_{3}}
\in \alpha \, \partial j(u),
\end{equation}
which has been recently studied in~\cite{GMOT}.

Here $\alpha$ is a positive constant which can be considered as the heat transfer coefficient on the boundary while
the function $j \colon \Gamma_{3} \times \real \to \real$, called a superpotential (nonconvex potential),
is such that $j(x, \cdot)$ locally Lipschitz for a.e. $x \in \Gamma_3$
and not necessary differentiable.
Since in general $j(x, \cdot)$ is nonconvex, so the multivalued condition on $\Gamma_3$ in problem (\ref{Pjalfa})
is described by a nonmonotone relation expressed by the generalized gradient of Clarke~\cite{C}.
Such multivalued relation in problem (\ref{Pjalfa}) is met
in certain types of steady-state heat conduction problems
(the behavior of a semipermeable membrane of finite
thickness, a temperature control problems, etc.).
Further, problem (\ref{Pjalfa}) can be considered as a prototype of several boundary semipermeability models,
see~\cite{MO,NP,P,ZLM}, which are motivated by problems arising in hydraulics, fluid flow problems through porous media,
and electrostatics, where the solution represents the pressure and the electric potentials.
Note that the analogous problems with maximal monotone multivalued boundary relations (that is the case when $j(x, \cdot)$
is a convex function) were considered in~\cite{Barbu,DL},
see also references therein.

Under the above notation,
the weak formulation of the elliptic problem (\ref{Pjalfa})
becomes the following elliptic boundary hemivariational inequality~\cite{GMOT}:
\begin{equation}\label{Pj0alfavariacional}
\mbox{find} \ \ u \in V_0 \ \ \mbox{such that} \ \
a(u,v) + \alpha \int_{\Gamma_{3}}j^{0}(u;v)\, d\Gamma
\geq L(v) \ \ \mbox{\rm for all} \ \  v\in V_{0}.
\end{equation}
Here and in what follows we often omit the variable
$x$ and we simply write $j(r)$ instead of $j(x, r)$.
The stationary heat conduction models with nonmonotone multivalued subdifferential interior and boundary semipermeability relations
cannot be described by convex potentials.
They use locally Lipschitz potentials and their weak formulations lead to hemivariational inequalities, see~\cite[Chapter~5.5.3]{NP} and~\cite{P}.

We mention that theory of hemivariational and variational inequalities has been proposed in the 1980s
by Panagiotopoulos, see~\cite{NP,P0,P1},
as variational formulations of important classes of inequality problems in mechanics.
In the last few years, new kinds of variational, hemivariational, and variational-hemivariational
inequalities have been investigated, see recent monographs~\cite{CLM,MOS,SM},
and the theory has emerged today as a new and interesting branch of applied mathematics.

We consider the distributed optimal control problem of the type studied in \cite{GaTa, Li, Tr} given by:
\begin{equation}\label{OPVariational}
\text{find}\quad g^{*}\in H \quad \text{such that} \quad J(g^{*})=\min_{g\in H}J(g)
\end{equation}
with
\begin{equation}
J(g)=\frac{1}{2}||u_{g}-z_{d}||^{2}_{H}+\frac{M}{2}||g||^{2}_{H}
\end{equation}
where $u_{g}$ is the unique solution to the variational equality (\ref{Pvariacional}), $z_{d}\in H$ given and $M$ a positive constant.

The goal of this paper is to formulate, for each $\alpha>0$, the following new distributed optimal control problem
\begin{equation}\label{OPHemivariational}
\text{find}\quad g_{\alpha}^{*}\in H \quad \text{such that} \quad J_{\alpha}(g_{\alpha}^{*})=\min_{g\in H}J_{\alpha}(g)
\end{equation}
with
\begin{equation}
J_{\alpha}(g)=\frac{1}{2}||u_{\alpha g}-z_{d}||^{2}_{H}+\frac{M}{2}||g||^{2}_{H}
\end{equation}
where $u_{\alpha g}$ is a solution to the hemivariational inequality (\ref{Pj0alfavariacional}), $z_{d}\in H$ given and $M$ a positive constant, and to study the convergent to problem (\ref{OPHemivariational}) when the parameter $\alpha$ goes to infinity.

The paper is structured as follows. In Section~\ref{Preliminaries} we establish preliminaries concepts of the hemivariational inequalities theory, which are necessary for the development of the following sections. In Section~\ref{OCP}, for each $\alpha >0$, we obtain an existence result of solution to the optimal control problem (\ref{OPHemivariational}). Finally, in Section~\ref{Asymptotic}, we prove the strong convergence of a sequence of optimal controls of the problems (\ref{OPHemivariational}) to the unique optimal control of the problem (\ref{OPVariational}), when the parameter $\alpha$ goes to infinity. Moreover, we obtain the strong convergence of the system states related to the problems (\ref{OPHemivariational}) to the system state related to the problem (\ref{OPVariational}), when $\alpha$ goes to infinity. These results generalize for a locally Lipschitz function $j$, under the hypothesis $H(j)$ and $(H_1)$, the classical results obtained in ~\cite{GaTa} for a quadratic superpotential $j$.

\section{Preliminaries}\label{Preliminaries}

In this section we recall standard notation and preliminary concepts, which are necessary for the development of this paper.

Let $(X, \| \cdot \|_{X})$ be a reflexive Banach space,  $X^{*}$ be its dual, and $\langle \cdot, \cdot \rangle$ denote the duality between $X^*$ and $X$.
For a real valued function defined
on $X$, we have the following
definitions~\cite[Section~2.1]{C} and~\cite{DMP,MOS}.
\begin{Definition}
	A function $\varphi \colon X\rightarrow \mathbb{R}$
	is said to be locally Lipschitz, if for every $x\in X$
	there exist $U_{x}$ a neighborhood of $x$ and a constant $L_{x}>0$ such that
	$$
	|\varphi(y)-\varphi(z)|\leq L_{x}\|y-z\|_{X}
	\ \ \mbox{\rm for all} \ \ y, z\in U_{x}.
	$$
	For such a function the generalized (Clarke) directional derivative of $j$ at the point $x\in X$ in the direction
	$v\in X$ is defined by
	$$
	\varphi^{0}(x;v)=\limsup\limits_{y \rightarrow x, \, \lambda \rightarrow 0^{+}}
	\frac{\varphi(y +\lambda v)-\varphi(y)}{\lambda} \, .
	$$
	The generalized gradient (subdifferential)
	of $\varphi$ at $x$ is a subset of the dual space $X^{*}$ given by
	$$
	\partial \varphi(x)=\{\zeta\in X^{*} \mid \varphi^{0}(x;v)\geq \langle
	\zeta,v\rangle \ \ \mbox{\rm for all} \ \  v \in X\}.
	$$
\end{Definition}

We consider the following hypothesis.

\medskip

\noindent
${\underline{H(j)}}$: $j\colon \Gamma_3 \times \real \to \real$ is such that

\smallskip

\noindent
\quad (a) $j(\cdot, r)$ is measurable for all $r \in \real$,

\smallskip

\noindent
\quad (b)
$j(x, \cdot)$ is locally Lipschitz for a.e. $x \in \Gamma_3$,

\smallskip

\noindent
\quad (c)
there exist $c_0$, $c_1 \ge 0$ such that
$| \partial j(x, r)| \le c_0 + c_1 |r|$
for all $r \in \real$, a.e. $x\in \Gamma_3$,

\smallskip

\noindent
\quad (d)
$j^0(x, r; b-r) \le 0$ for all $r \in \real$,
a.e. $x \in \Gamma_{3}$ with a constant $b \in \real$.

\medskip

Note that the existence results for elliptic hemivariational inequalities can be found in several contributions, see~\cite{CLM,unified,ELAS,MOS,NP}. In \cite[Theorem~4]{GMOT}, the hypothesis $H(j)$(d) is considered in order to obtain existence of a solution to problem \eqref{Pj0alfavariacional}. Moreover, under this condition the authors have studied the asymptotic behavior  when $\alpha \to \infty$ (see \cite[Theorem~7]{GMOT}).

We note that, if the hypothesis $H(j)$(d) is replaced
by the relaxed monotonicity condition (see \cite[Remark~10]{GMOT} for details)
\begin{equation*}
(e)\qquad j^0(x, r; s-r) + j^0(x,s; r-s) \le m_j \, |r-s|^2
\end{equation*}
for all $r$, $s \in \real$, a.e. $x\in\Gamma_3$
with $m_j \ge 0$,
and the following smallness condition
$$
(f)\qquad m_a > \alpha \, m_j \| \gamma\|^2
$$
is assumed, then
problem (\ref{Pj0alfavariacional}) is uniquely solvable,
see~\cite[Lemma~20]{ELAS} for the proof.
However, this smallness condition is not suitable in the study to problem \eqref{Pj0alfavariacional} since for a sufficiently large value of $\alpha$, it is not satisfied. Finally, in \cite{GMOT} we can find several examples of locally Lipschitz (nondifferentiable and nonconvex) functions which satisfies the above hypotheses.

\section{Optimal control problems}\label{OCP}

We know, by \cite{GaTa}, that there exists a unique optimal pair $(g^{*},u_{g^{*}})\in H\times V_{0}$ of the distributed optimal control problem (\ref{OPVariational}).
Now, we pass to a result on existence of solution to the optimal control problem (\ref{OPHemivariational}) in which the system is governed by the hemivariational inequality (\ref{Pj0alfavariacional}).

\begin{Theorem}\label{existence}
For each $\alpha > 0$, if $H(j) (a)-(d)$ holds,
then the distributed optimal control problems (\ref{OPHemivariational})
has a solution.
\end{Theorem}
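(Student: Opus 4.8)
The plan is to apply the direct method of the calculus of variations to $J_\alpha$. Since $J_\alpha(g)\ge 0$ for every $g\in H$, the value $\mu:=\inf_{g\in H}J_\alpha(g)$ is finite and nonnegative. First I would fix a minimizing sequence $\{g_n\}\subset H$ with $J_\alpha(g_n)\to\mu$, and for each $n$ let $u_n:=u_{\alpha g_n}\in V_0$ be an associated solution of the hemivariational inequality (\ref{Pj0alfavariacional}) with internal energy $g_n$, whose existence is guaranteed by \cite[Theorem~4]{GMOT} under $H(j)(a)$--$(d)$. From the Tikhonov term, $\tfrac{M}{2}\|g_n\|_H^2\le J_\alpha(g_n)\le \mu+1$ for $n$ large, so $\{g_n\}$ is bounded in the Hilbert space $H$; passing to a subsequence (not relabelled) there is $g^*\in H$ with $g_n\rightharpoonup g^*$ weakly in $H$.

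The next step, and the one genuinely using $H(j)$, is a uniform a priori bound on the states. Writing (\ref{Pj0alfavariacional}) in its equivalent multiplier form, which is standard for hemivariational inequalities of this type — there is $\xi_n\in L^2(\Gamma_3)$ with $\xi_n(x)\in\partial j(x,u_n(x))$ a.e.\ and $a(u_n,v)+\alpha\int_{\Gamma_3}\xi_n\,\gamma(v)\,d\Gamma=L_n(v)$ for all $v\in V_0$, with $L_n(v)=\int_\Omega g_n v\,dx-\int_{\Gamma_2}q\,\gamma(v)\,d\Gamma$ — I would test with $v=u_n$. Applied pointwise, condition $H(j)(d)$ gives $\xi_n(b-u_n)\le j^0(u_n;b-u_n)\le 0$, hence $\int_{\Gamma_3}\xi_n u_n\ge b\int_{\Gamma_3}\xi_n$, while the growth condition $H(j)(c)$ controls $|\int_{\Gamma_3}\xi_n|$ linearly in $\|u_n\|_{L^2(\Gamma_3)}$. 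Combined with the coercivity (\ref{coercive}) and continuity of the trace, this produces a quadratic inequality of the form $m_a\|u_n\|_V^2\le C(1+\|g_n\|_H)\|u_n\|_V+C'$, whence $\|u_n\|_{V_0}\le C(1+\|g_n\|_H)$. As $\{g_n\}$ is bounded, $\{u_n\}$ is bounded in $V_0$; extracting a further subsequence, $u_n\rightharpoonup u^*$ in $V_0$, and by the compact embedding of $V$ into $L^2(\Omega)$ (Rellich--Kondrachov) and compactness of the trace $\gamma\colon V\to L^2(\Gamma_3)$, $u_n\to u^*$ strongly in $H$ and $\gamma(u_n)\to\gamma(u^*)$ strongly in $L^2(\Gamma_3)$.

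The main obstacle is to show that $u^*$ solves (\ref{Pj0alfavariacional}) for the control $g^*$, that is, that the solution map has weakly--strongly closed graph. Fixing $v\in V_0$ and starting from $\alpha\int_{\Gamma_3}j^0(u_n;v)\,d\Gamma\ge L_n(v)-a(u_n,v)$, the linear terms pass to the limit at once: $a(u_n,v)\to a(u^*,v)$ since $u_n\rightharpoonup u^*$ in $V_0$, and $L_n(v)\to L(v)$ since $g_n\rightharpoonup g^*$ in $H$. For the nonsmooth term I would invoke the upper semicontinuity of $(r,s)\mapsto j^0(x,r;s)$: along an a.e.-convergent subsequence of $\gamma(u_n)\to\gamma(u^*)$ on $\Gamma_3$ one has $\limsup_n j^0(u_n(x);v(x))\le j^0(u^*(x);v(x))$ pointwise, and the bound $|j^0(x,u_n;v)|\le(c_0+c_1|u_n|)|v|$ from $H(j)(c)$ supplies an $L^1(\Gamma_3)$ dominating function, so the reverse Fatou lemma yields $\limsup_n\int_{\Gamma_3}j^0(u_n;v)\,d\Gamma\le\int_{\Gamma_3}j^0(u^*;v)\,d\Gamma$. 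Taking $\limsup$ in the displayed inequality then gives $a(u^*,v)+\alpha\int_{\Gamma_3}j^0(u^*;v)\,d\Gamma\ge L(v)$ for all $v\in V_0$, i.e.\ $u^*=u_{\alpha g^*}$.

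Finally I would close by weak lower semicontinuity of $J_\alpha$. The strong convergence $u_n\to u^*$ in $H$ gives $\|u_n-z_d\|_H^2\to\|u^*-z_d\|_H^2$, and $g_n\rightharpoonup g^*$ gives $\|g^*\|_H^2\le\liminf_n\|g_n\|_H^2$; hence $J_\alpha(g^*)\le\liminf_n J_\alpha(g_n)=\mu$. Since $\mu$ is the infimum, $J_\alpha(g^*)=\mu$, so $g^*$, with associated state $u^*$, solves (\ref{OPHemivariational}). Throughout, the minimization is understood over the graph of the possibly multivalued solution map $g\mapsto u_{\alpha g}$, which is the natural reading when uniqueness of the state is not available; the delicate point is precisely the passage to the limit in the Clarke term, for which $H(j)(c)$ is essential both for the a priori estimate and as the dominating function in the reverse Fatou step.
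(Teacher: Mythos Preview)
Your proof is correct and follows the same direct-method skeleton as the paper: bound the minimizing controls via the Tikhonov term, bound the states uniformly, extract weak limits, pass to the limit in the hemivariational inequality using compactness of the trace together with the upper semicontinuity of $j^0$ and the growth condition $H(j)(c)$ as dominating function, and conclude by weak lower semicontinuity of $J_\alpha$. The one substantive difference is the a~priori estimate on the states. You pass to the multiplier form, test with $v=u_n$, and combine $H(j)(d)$ with $H(j)(c)$ to control $\alpha\int_{\Gamma_3}\xi_n u_n$ from below, arriving at a quadratic inequality whose constants depend on $\alpha$. The paper instead tests the hemivariational inequality directly with $v=u_\infty-u_{\alpha g_n}$ for a fixed $u_\infty\in K$; since $u_\infty=b$ on $\Gamma_3$, hypothesis $H(j)(d)$ alone kills the nonsmooth term, and one obtains the linear bound $\|u_\infty-u_{\alpha g_n}\|_{V_0}\le M_a\|u_\infty\|_V+\|g_n\|_H+C\|q\|_Q$ without invoking the multiplier form or $H(j)(c)$ at this step, and with constants independent of $\alpha$. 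Both routes are valid here since $\alpha$ is fixed; the paper's choice is a bit slicker and foreshadows the $\alpha$-uniform estimates needed later in the asymptotic analysis.
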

\begin{proof}
By definition, for each $\alpha >0$, the functional $J_{\alpha}$ is bounded from below. Next, taking into account that the hemivariational inequality (\ref{Pj0alfavariacional}) has solution (see \cite[Theorem 4]{GMOT}), for each $\alpha >0$ and each $g\in H$, we denote by  $T_{\alpha}(g)$ the set of solutions of (\ref{Pj0alfavariacional}) and we have that
\begin{equation}\label{min1}
m=\inf\{J_{\alpha}(g), g\in H, u_{\alpha g}\in T_{\alpha}(g)\}\geq 0.
\end{equation}
Let $g_{n}\in H$ be a minimizing sequence to (\ref{min1}) such that
\begin{equation}\label{min2}
m\leq J_{\alpha}(g_{n}) \leq m + \frac{1}{n}.
\end{equation}
Taking into account that the functional $J_{\alpha}$ satisfies $$\lim\limits_{||g||_{H}\rightarrow +\infty}J_{\alpha}(g)=+\infty$$
we obtain that there exists $C_{1} >0$ such that
\begin{equation}\label{cota1}
||g_{n}||_{H}\leq C_{1}.
\end{equation}
Moreover, we can prove that there exists $C_{2} >0$ such that
\begin{equation}\label{cota2}
||u_{\alpha g_{n}}||_{V_{0}}\leq C_{2}.
\end{equation}
In fact, let $u_\infty \in K$ be the solution to problem (\ref{Pvariacional}). We have
\begin{equation*}\begin{split}
a(u_{\alpha g_{n}}, u_\infty -u_{\alpha g_{n}})
+ \alpha \int_{\Gamma_{3}}
j^0(u_{\alpha g_{n}}; u_\infty-u_{\alpha g_{n}}) \, d\Gamma
&\ge \int_{\Omega}g_{n}(u_\infty-u_{\alpha g_{n}})\, dx \\
&-\int_{\Gamma_{2}}q (u_\infty-u_{\alpha g_{n}})\, d\Gamma.\end{split}
\end{equation*}
Hence
\begin{equation*}\begin{split}
a(u_\infty - u_{\alpha g_{n}}, u_\infty -u_{\alpha g_{n}})
& \le a(u_\infty, u_\infty-u_{\alpha g_{n}})
+ \alpha \int_{\Gamma_{3}}j^0(u_{\alpha g_{n}}; b-u_{\alpha g_{n}}) \, d\Gamma \\
& + \int_{\Omega}g_{n} (u_{\alpha g_{n}}-u_\infty)\, dx-\int_{\Gamma_{2}}q (u_\infty-u_{\alpha g_{n}})\, d\Gamma.
\end{split}\end{equation*}
From hypothesis $H(j)$(d),
since the form $a$ is bounded (with positive constant $M_{a}$), we get
\begin{equation*}\begin{split}
\| u_\infty - u_{\alpha g_{n}} \|_{V_{0}}^2
& \le a(u_\infty, u_\infty-u_{\alpha g_{n}}) + \int_{\Omega}g_{n} (u_{\alpha g_{n}}-u_\infty)\, dx-\int_{\Gamma_{2}}q (u_\infty-u_{\alpha g_{n}})\, d\Gamma \\
& \le
M_{a} \| u_\infty \|_V \| u_\infty - u_{\alpha g_{n}} \|_V
+ \left(||g_{n}||_{H}+||q||_{Q}||\gamma||\right) \| u_\infty - u_{\alpha g_{n}} \|_V\\
&\le  \left( M_{a}\| u_\infty \|_V+C_{1}+C_{3}||q||_{Q}||\gamma||\right) \| u_\infty - u_{\alpha g_{n}} \|_{V_{0}}
\end{split}\end{equation*}
where $||\gamma||$ denote the norm of trace operator and $C_{3}$ is a positive constant due to the equivalence of norms. Subsequently, we obtain (\ref{cota2}). Therefore, there exist $f\in H$ and $\eta_{\alpha}\in V_{0}$ such that
\[
u_{\alpha g_{n}}\rightharpoonup \eta_{\alpha}\quad \text{in}\quad V_{0} \quad\text{weakly} \qquad \text{and}\qquad
 g_{n}\rightharpoonup f\quad \text{in}\quad H \quad \text{weakly}.
\]
Now, for all $g_{n}\in H$, we have
\[
a(u_{\alpha g_{n}},v) + \alpha \int_{\Gamma_{3}}j^{0}(u_{\alpha g_{n}};v)\, d\Gamma
\geq \int_{\Omega}g_{n}v \, dx -\int_{\Gamma_{2}}qv\, d\Gamma \ \ \mbox{\rm for all} \ \  v\in V_{0}
\]
and taking the upper limit, we obtain
\begin{equation}\label{CCC}
a(\eta_{\alpha}, v)
+ \alpha \limsup_{n\rightarrow +\infty} \int_{\Gamma_{3}}
j^0(u_{\alpha g_{n}}; v) \, d\Gamma \ge \int_{\Omega}fv \, dx -\int_{\Gamma_{2}}qv\, d\Gamma
\ \ \mbox{\rm for all} \ \ v \in V_0.
\end{equation}
By the compactness of the trace operator from $V$ into $L^2(\Gamma_{3})$, we have
$u_{\alpha g_{n}} \big|_{\Gamma_{3}} \to \eta_{\alpha} \big|_{\Gamma_{3}}$ in $L^2(\Gamma_3)$, as $n \to +\infty$,
and at least for a subsequence,
$u_{\alpha g_{n}}(x) \to \eta_{\alpha}(x)$ for a.e. $x \in \Gamma_3$
and $|u_{\alpha g_{n}}(x)| \le h_{\alpha}(x)$ a.e. $x \in \Gamma_3$,
where $h_{\alpha} \in L^2(\Gamma_3)$.
Since the function
$\real\times \real \ni (r, s) \mapsto j^0(x, r; s) \in \real$
a.e. on $\Gamma_3$ is upper semicontinuous,  see \cite[Proposition 3]{GMOT}, we obtain
$$
\limsup_{n\rightarrow +\infty} j^0(x,u_{\alpha g_{n}}(x); v(x)) \le j^0(x, \eta_{\alpha}(x); v(x))
\ \ \mbox{a.e.} \ \ x \in \Gamma_3.
$$
Next, from $H(j)(c)$, we deduce the estimate
\begin{equation*}
|j^0(x, u_{\alpha g_{n}}(x); v(x))|
\le (c_0 + c_1 |u_{\alpha g_{n}}(x)|) \, |v(x)| \le k_{\alpha}(x)
\ \ \mbox{a.e.} \ \ x \in \Gamma_3
\end{equation*}
where $k_{\alpha} \in L^1(\Gamma_3)$,
$k_{\alpha}(x) = (c_0 + c_1 h_{\alpha}(x)) |v(x)|$ and we apply the dominated convergence theorem, see~\cite{DMP} to get
$$
\limsup_{n\rightarrow +\infty} \int_{\Gamma_3} j^0(u_{\alpha g_{n}}; v) \, d\Gamma
\le \int_{\Gamma_3} \limsup_{n\rightarrow +\infty} j^0(u_{\alpha g_{n}}; v) \, d\Gamma
\le \int_{\Gamma_3} j^0(\eta_{\alpha}; v)\, d\Gamma.
$$
Using the latter in (\ref{CCC}), we obtain
\[
a(\eta_{\alpha},v) + \alpha \int_{\Gamma_{3}}j^{0}(\eta_{\alpha};v)\, d\Gamma
\geq \int_{\Omega}fv \, dx -\int_{\Gamma_{2}}qv\, d\Gamma \ \ \mbox{\rm for all} \ \  v\in V_{0}
\]
that is, $\eta_{\alpha}\in V_{0}$ is a solution to the hemivariational inequality (\ref{Pj0alfavariacional}). Next,  we have proved that
\[
\eta_{\alpha}=u_{\alpha f}
\]
where $u_{\alpha f}$ is a solution of the hemivariational inequality (\ref{Pj0alfavariacional}) for data $f\in H$ and $q\in Q$. Finally, from (\ref{min2}) and the weak lower semicontinuity  of $J_{\alpha}$, we have
\begin{equation}\begin{split}
m&\geq \liminf_{n\rightarrow +\infty}J_{\alpha}(g_{n})\\& \geq \frac{1}{2}\liminf_{n\rightarrow +\infty} ||u_{\alpha g_{n}}-z_{d}||^{2}_{H}+\frac{M}{2}\liminf_{n\rightarrow +\infty}||g_{n}||^{2}_{H}\\
& \geq \frac{1}{2}||u_{\alpha f}-z_{d}||^{2}_{H}+\frac{M}{2}||f||^{2}_{H}=J_{\alpha}(f),\nonumber
\end{split}\end{equation}
and therefore, $(f,u_{\alpha f})$ is an optimal pair to optimal control problem (\ref{OPHemivariational}).
\end{proof}

\section{Asymptotic behavior of the optimal controls}\label{Asymptotic}

In this section we investigate the asymptotic behavior of the optimal  solutions to problem~(\ref{OPHemivariational}) when $\alpha \rightarrow\infty$.
To this end, we need the following additional hypothesis
on the superpotential~$j$.

\medskip

\noindent
${\underline{(H_1)}}$: \quad
if $j^0(x, r; b-r) = 0$ for all $r \in \real$,
a.e. $x \in \Gamma_{3}$, then $r = b$.

\begin{Theorem}\label{Theorem6}
Assume $H(j)$ and $(H_1)$. If $(g_{\alpha},u_{\alpha g_{\alpha}})$ is a optimal solution to problem (\ref{OPHemivariational}) and $(g^{*},u_{\infty g^{*}})$ is the unique solution to problem (\ref{OPVariational}), then $g_{\alpha} \to g^{*}$ in $H$ strongly and $u_{\alpha g_{\alpha}} \to u_{\infty g^{*}}$ in $V$ strongly, when $\alpha \to \infty$.
\end{Theorem}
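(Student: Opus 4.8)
The plan is to combine a compactness argument with an identification of the limit state, followed by a standard optimality sandwich, and finally an upgrade from weak to strong convergence. First I would establish uniform (in $\alpha$) a priori bounds. Since $g_{\alpha}$ is optimal, $\tfrac{M}{2}\|g_{\alpha}\|_{H}^{2}\le J_{\alpha}(g_{\alpha})\le J_{\alpha}(g^{*})$; because the coercivity estimate behind \eqref{cota2} uses $H(j)(d)$ to discard the boundary term and is therefore independent of $\alpha$, the state $u_{\alpha g^{*}}$ stays bounded in $V_{0}$, so $J_{\alpha}(g^{*})$ is bounded uniformly in $\alpha$ and hence so is $\|g_{\alpha}\|_{H}$. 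Feeding this bound back into the argument for \eqref{cota2} gives $\|u_{\alpha g_{\alpha}}\|_{V_{0}}\le C$ uniformly. I may then extract a subsequence with $g_{\alpha}\rightharpoonup \widehat{g}$ weakly in $H$ and $u_{\alpha g_{\alpha}}\rightharpoonup \xi$ weakly in $V_{0}$; by the compact embedding $V\hookrightarrow H$ and compactness of the trace, $u_{\alpha g_{\alpha}}\to \xi$ strongly in $H$ and in $L^{2}(\Gamma_{3})$.

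The heart of the proof, and the step I expect to be the main obstacle, is the identification $\xi=u_{\infty\widehat{g}}$. Testing \eqref{Pj0alfavariacional} with $v=u_{\infty}-u_{\alpha g_{\alpha}}$ (which equals $b-u_{\alpha g_{\alpha}}$ on $\Gamma_{3}$), coercivity gives $-\alpha\int_{\Gamma_{3}}j^{0}(u_{\alpha g_{\alpha}};b-u_{\alpha g_{\alpha}})\,d\Gamma\le C$ with $C$ independent of $\alpha$; since the integrand is nonpositive by $H(j)(d)$, this forces $\int_{\Gamma_{3}}j^{0}(u_{\alpha g_{\alpha}};b-u_{\alpha g_{\alpha}})\,d\Gamma\to 0$. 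Passing to the limit with the upper semicontinuity of $j^{0}$ and dominated convergence (exactly as in the proof of Theorem~\ref{existence}) yields $\int_{\Gamma_{3}}j^{0}(\xi;b-\xi)\,d\Gamma\ge 0$, while $H(j)(d)$ makes the integrand nonpositive; hence it vanishes a.e., and $(H_1)$ forces $\xi=b$ a.e. on $\Gamma_{3}$, so $\xi\in K$. To recover the equation I would use $v\in K_{0}$: since $v=0$ on $\Gamma_{3}$ gives $j^{0}(u_{\alpha g_{\alpha}};v)=0$ there, \eqref{Pj0alfavariacional} applied to $\pm v$ collapses to $a(u_{\alpha g_{\alpha}},v)=\int_{\Omega}g_{\alpha}v\,dx-\int_{\Gamma_{2}}q\,\gamma(v)\,d\Gamma$; letting $\alpha\to\infty$ gives $a(\xi,v)=\int_{\Omega}\widehat{g}\,v\,dx-\int_{\Gamma_{2}}q\,\gamma(v)\,d\Gamma$ for all $v\in K_{0}$, i.e. the variational equality \eqref{Pvariacional} for the datum $\widehat{g}$, so by uniqueness $\xi=u_{\infty\widehat{g}}$.

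Next I would identify the limit control. By weak lower semicontinuity and the strong $H$-convergence of the states, $\liminf_{\alpha}J_{\alpha}(g_{\alpha})\ge J(\widehat{g})$. Optimality gives $J_{\alpha}(g_{\alpha})\le J_{\alpha}(g^{*})$, and the asymptotic result \cite[Theorem~7]{GMOT} (or a repetition of the identification above for the fixed control $g^{*}$) yields $u_{\alpha g^{*}}\to u_{\infty g^{*}}$ in $H$, hence $J_{\alpha}(g^{*})\to J(g^{*})$. Combining, $J(\widehat{g})\le\liminf_{\alpha}J_{\alpha}(g_{\alpha})\le\limsup_{\alpha}J_{\alpha}(g_{\alpha})\le\lim_{\alpha}J_{\alpha}(g^{*})=J(g^{*})$, and since $g^{*}$ is the unique minimizer of $J$ this forces $\widehat{g}=g^{*}$ with equality throughout. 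Because the limit is unique, the usual subsequence argument upgrades the convergence to the whole family indexed by $\alpha\to\infty$.

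Finally I would upgrade to strong convergence. The sandwich above is an equality, so $\lim_{\alpha}J_{\alpha}(g_{\alpha})=J(g^{*})$; subtracting the convergent state term gives $\|g_{\alpha}\|_{H}\to\|g^{*}\|_{H}$, which together with $g_{\alpha}\rightharpoonup g^{*}$ yields $g_{\alpha}\to g^{*}$ strongly in $H$. For the states, set $w_{\alpha}=u_{\infty g^{*}}-u_{\alpha g_{\alpha}}\in V_{0}$; testing \eqref{Pj0alfavariacional} with $w_{\alpha}$ and using $H(j)(d)$ to drop the nonpositive boundary term gives $a(u_{\alpha g_{\alpha}},w_{\alpha})\ge \int_{\Omega}g_{\alpha}w_{\alpha}\,dx-\int_{\Gamma_{2}}q\,\gamma(w_{\alpha})\,d\Gamma=:\ell_{\alpha}$, whence $\|w_{\alpha}\|_{V_{0}}^{2}=a(u_{\infty g^{*}},w_{\alpha})-a(u_{\alpha g_{\alpha}},w_{\alpha})\le a(u_{\infty g^{*}},w_{\alpha})-\ell_{\alpha}$. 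Both terms on the right tend to $0$ (the first by $w_{\alpha}\rightharpoonup 0$ in $V_{0}$, the second by the strong $H$-convergence of $w_{\alpha}$ and the compactness of the trace on $\Gamma_{2}$), so $\|w_{\alpha}\|_{V_{0}}\to 0$ and, by the equivalence of norms, $u_{\alpha g_{\alpha}}\to u_{\infty g^{*}}$ strongly in $V$.
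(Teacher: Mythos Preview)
Your proposal is correct and follows essentially the same three-step structure as the paper (uniform bounds and weak compactness; identification of the limits via $H(j)(d)$, $(H_1)$, and the optimality sandwich with \cite[Theorem~7]{GMOT}; upgrade to strong convergence by norm convergence for the controls and by testing with $u_{\infty g^{*}}-u_{\alpha g_{\alpha}}$ for the states). The one noteworthy difference is in recovering the limit equation: the paper tests with $w-u_{\alpha g_{\alpha}}$ for $w\in K$, drops the boundary term via $H(j)(d)$, passes to the limit using weak lower semicontinuity of $v\mapsto a(v,v)$, and only then uses the linearity of $K_{0}$ to turn the resulting inequality into an equality, whereas you test directly with $\pm v\in K_{0}$ and use $j^{0}(\cdot\,;0)=0$ to obtain $a(u_{\alpha g_{\alpha}},v)=\int_{\Omega}g_{\alpha}v-\int_{\Gamma_{2}}q\,\gamma(v)$ for every $\alpha$, which is a slightly more direct route to the same conclusion.
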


\begin{proof} We will make the prove in three steps.
\newline
\textbf{Step 1.} Since $(g_{\alpha},u_{\alpha g_{\alpha}})$ is a optimal solution to problem (\ref{OPHemivariational}), we have the following inequality
\[
\frac{1}{2}||u_{\alpha g_{\alpha}}-z_{d}||^{2}_{H}+\frac{M}{2}||g_{\alpha}||^{2}_{H}\leq \frac{1}{2}||u_{\alpha g}-z_{d}||^{2}_{H}+\frac{M}{2}||g||^{2}_{H},\quad \forall g\in H
\]
and taking $g=0$, we obtain that there exists a positive constant $C_{1}$ such that
\[
\frac{1}{2}||u_{\alpha g_{\alpha}}-z_{d}||^{2}_{H}+\frac{M}{2}||g_{\alpha}||^{2}_{H}\leq \frac{1}{2}||u_{\alpha 0}-z_{d}||^{2}_{H}\leq C_{1}
\]
because $\{u_{\alpha 0}\}$ is convergent when $\alpha \to \infty$, see \cite[Theorem 7]{GMOT}. Therefore, there exist positive constants $C_{2}$ and $C_{3}$, independent of $\alpha$, such that
\begin{equation}\label{cota}
||g_{\alpha}||_{H}\leq C_{2}\quad \text{and}\quad
||u_{\alpha g_{\alpha}}||_{H}\leq C_{3}.
\end{equation}
Now, we choose $v = u_{\infty g^{*}} - u_{\alpha g_{\alpha}} \in V_0$
as a test function in the elliptic boundary hemivariational inequality (\ref{Pj0alfavariacional}) to obtain
\begin{equation*}
a(u_{\alpha g_{\alpha}}, u_{\infty g^{*}} - u_{\alpha g_{\alpha}})
+ \alpha \int_{\Gamma_{3}}
j^0(u_{\alpha g_{\alpha}}; u_{\infty g^{*}} - u_{\alpha g_{\alpha}}) \, d\Gamma
\ge L(u_{\infty g^{*}} - u_{\alpha g_{\alpha}}).
\end{equation*}
From the equality
$$a(u_{\alpha g_{\alpha}}, u_{\infty g^{*}} - u_{\alpha g_{\alpha}}) =
- a(u_{\infty g^{*}} - u_{\alpha g_{\alpha}}, u_{\infty g^{*}} - u_{\alpha g_{\alpha}})
+ a(u_{\infty g^{*}}, u_{\infty g^{*}} - u_{\alpha g_{\alpha}}),$$
we get
\begin{equation}\begin{split}\label{N2}
& a(u_{\infty g^{*}} - u_{\alpha g_{\alpha}}, u_{\infty g^{*}} - u_{\alpha g_{\alpha}}) - \alpha \int_{\Gamma_{3}}
j^0(u_{\alpha g_{\alpha}}; u_{\infty g^{*}} - u_{\alpha g_{\alpha}}) \, d\Gamma \\ & \le a(u_{\infty g^{*}}, u_{\infty g^{*}} - u_{\alpha g_{\alpha}}) - L(u_{\infty g^{*}} - u_{\alpha g_{\alpha}}).
\end{split}\end{equation}
Taking into account that
$j^0(x, u_{\alpha g_{\alpha}}; u_{\infty g^{*}} - u_{\alpha g_{\alpha}}) = j^0(x, u_{\alpha g_{\alpha}}; b-u_{\alpha g_{\alpha}})$
on $\Gamma_{3}$, and by $H(j)$(d), we have
$j^0(x, u_{\alpha g_{\alpha}}; u_{\infty g^{*}} - u_{\alpha g_{\alpha}}) \le 0$ on $\Gamma_{3}$.
Hence
\begin{equation*}
a(u_{\infty g^{*}} - u_{\alpha g_{\alpha}},u_{\infty g^{*}} - u_{\alpha g_{\alpha}}) \le a(u_{\infty g^{*}}, u_{\infty g^{*}} - u_{\alpha g_{\alpha}}) - L(u_{\infty g^{*}} - u_{\alpha g_{\alpha}}).
\end{equation*}
By the boundedness and coerciveness of $a$, we infer
\begin{equation*}
m_a \| u_{\infty g^{*}} - u_{\alpha g_{\alpha}} \|_V^2 \le (M_{a} \|u_{\infty g^{*}}\|_V
+ \| L \|_{V^*}) \, \| u_{\infty g^{*}} - u_{\alpha g_{\alpha}} \|_V
\end{equation*}
with $M_{a} > 0$,
and subsequently
\begin{equation}\begin{split}\label{ZZZ}
\| u_{\alpha g_{\alpha}} \|_V &\le \| u_{\infty g^{*}} - u_{\alpha g_{\alpha}} \|_V + \| u_{\infty g^{*}} \|_V\\ &
\le \frac{1}{m_a}
(M_{a} \|u_{\infty g^{*}}\|_V  + \| L \|_{V^*}) +
\| u_{\infty g^{*}} \|_V \\& =: C_{4},
\end{split} \end{equation}
where $C_{4} > 0$ is a constant independent of $\alpha$.
Hence, since $a(u_{\infty g^{*}} - u_{\alpha g_{\alpha}},u_{\infty g^{*}} - u_{\alpha g_{\alpha}}) \ge 0$, from (\ref{N2}),
we have
\begin{equation*}\begin{split}
- \alpha \int_{\Gamma_{3}}
j^0(u_{\alpha g_{\alpha}}; u_{\infty g^{*}} - u_{\alpha g_{\alpha}}) \, d\Gamma &\le
(M_{a} \|u_{\infty g^{*}}\|_V  + \| L \|_{V^*}) \, \| u_{\infty g^{*}} - u_{\alpha g_{\alpha}} \|_V\\ &
\le
\frac{1}{m_a} (M_{a} \|u_{\infty g^{*}}\|_V  + \| L \|_{V^*})^2
\\ &=: C_5,
\end{split}\end{equation*}
where $C_5 > 0$ is independent of $\alpha$.
Thus
\begin{equation}\label{N3}
- \int_{\Gamma_{3}}
j^0(u_{\alpha g_{\alpha}}; u_{\infty g^{*}} - u_{\alpha g_{\alpha}}) \, d\Gamma \le
\frac{C_5}{\alpha}.
\end{equation}
It follows from (\ref{ZZZ}) that
$\{ u_{\alpha g_{\alpha}} \}$ remains in a bounded subset
of $V$. Thus,
there exists $\eta \in V$ such that,
by passing to a subsequence if necessary, we have
\begin{equation}\label{CONV8}
u_{\alpha g_{\alpha}} \rightharpoonup \eta \ \ \mbox{weakly in} \ \ V, \ \mbox{as} \
\alpha \to \infty.
\end{equation}
Moreover. from (\ref{cota}) we have that there exists $h\in H$ such that
\begin{equation}\label{convcontrol}
g_{\alpha} \rightharpoonup h\ \ \mbox{weakly in} \ \ H, \ \mbox{as} \
\alpha \to \infty.
\end{equation}
\newline
\textbf{Step 2.} Next, we will show that $h=g^{*}$ and $\eta =u_{\infty g^{*}}$. We observe that $\eta \in V_0$ because
$\{u_{\alpha g_{\alpha}} \} \subset V_0$ and $V_0$ is sequentially weakly closed in $V$. Let $w \in K$ and $v = w - u_{\alpha g_{\alpha}} \in V_0$. From (\ref{Pj0alfavariacional}), we have
\begin{equation*}
L(w-u_{\alpha g_{\alpha}}) \le a(u_{\alpha g_{\alpha}}, w -u_{\alpha g_{\alpha}})
+ \alpha \int_{\Gamma_{3}}
j^0(u_{\alpha g_{\alpha}}; w-u_{\alpha g_{\alpha}}) \, d\Gamma.
\end{equation*}
Since $w = b$ on $\Gamma_3$, by $H(j)$(d), we have
\begin{equation*}
\alpha \int_{\Gamma_{3}} j^0(u_{\alpha g_{\alpha}}; w-u_{\alpha g_{\alpha}}) \, d\Gamma
= \alpha \int_{\Gamma_{3}} j^0(u_{\alpha g_{\alpha}}; b-u_{\alpha g_{\alpha}}) \, d\Gamma \le 0
\end{equation*}
which implies
\begin{equation}\label{INEQ2}
L(w-u_{\alpha g_{\alpha}}) \le a(u_{\alpha g_{\alpha}}, w -u_{\alpha g_{\alpha}}).
\end{equation}
Next, we use the weak lower semicontinuity of the functional
$V \ni v \mapsto a(v,v) \in \real$ and from (\ref{INEQ2}),
we deduce
\begin{equation}\label{INEQ4}
\eta \in V_0 \ \ \mbox{satisfies} \ \
L(w-\eta) \le a(\eta, w-\eta) \ \ \mbox{for all} \ \ w \in K.
\end{equation}
Subsequently, we will show that $\eta \in K$.
In fact, from (\ref{CONV8}), by the compactness of the trace operator, we have
$u_{\alpha g_{\alpha}} \big|_{\Gamma_{3}} \to
\eta \big|_{\Gamma_{3}}$ in $L^2(\Gamma_3)$, as
$\alpha \to \infty$. Passing to a subsequence if necessary,
we may suppose that
$u_{\alpha g_{\alpha}} (x) \to \eta(x)$ for a.e. $x \in \Gamma_3$
and there exists $f \in L^2(\Gamma_3)$ such that
$|u_{\alpha g_{\alpha}}(x)| \le f(x)$ a.e. $x \in \Gamma_3$.
Using the upper semicontinuity of the function
$\real\times \real \ni (r, s) \mapsto j^0(x, r; s)
\in \real$ for a.e. $x \in \Gamma_3$,
see \cite[Proposition 3 (iii)]{GMOT}, we get
$$
\limsup_{\alpha\rightarrow \infty} j^0(x, u_{\alpha g_{\alpha}}(x); b - u_{\alpha g_{\alpha}}(x))
\le j^0(x,\eta(x); b-\eta(x))
\ \ \mbox{a.e.} \ \ x \in \Gamma_3.
$$
Next, taking into account the estimate
\begin{equation*}
|j^0(x, u_{\alpha g_{\alpha}}(x); b - u_{\alpha g_{\alpha}}(x))|
\le (c_0 + c_1 |u_{\alpha g_{\alpha}} (x)|) \, |b-u_{\alpha g_{\alpha}}(x)| \le k(x)
\ \ \mbox{a.e.} \ \ x \in \Gamma_3
\end{equation*}
with $k \in L^1(\Gamma_3)$ given by
$k(x) = (c_0 + c_1 f(x)) (|b| + f(x))$,
by the dominated convergence theorem, see~\cite{DMP}, we
obtain
$$
\limsup_{\alpha\rightarrow \infty} \int_{\Gamma_3} j^0(u_{\alpha g_{\alpha}}; b - u_{\alpha g_{\alpha}})
\, d\Gamma
\le \int_{\Gamma_3} j^0(\eta; b-\eta)\, d\Gamma.
$$
Consequently, from $H(j)$(d) and (\ref{N3}), we have
\begin{equation*}
0 \le -\int_{\Gamma_{3}} j^0(\eta; b-\eta) \, d\Gamma
\le \liminf_{\alpha\rightarrow \infty} \left(
-\int_{\Gamma_{3}} j^0(u_{\alpha g_{\alpha}}; b-u_{\alpha g_{\alpha}})
\, d\Gamma \right) \le 0
\end{equation*}
which gives
$\int_{\Gamma_{3}} j^0(\eta; b-\eta) \, d\Gamma =0$.
Again by $H(j)$(d), we get $j^0(x, \eta; b-\eta) = 0$
a.e. $x\in\Gamma_{3}$.
Using $(H_1)$, we have $\eta(x) = b$ for a.e.
$x \in \Gamma_3$, which together with (\ref{INEQ4}) implies
\begin{equation*}\label{INEQ5}
\eta \in K \ \ \mbox{satisfies} \ \
L(w-\eta) \le a(\eta, w-\eta) \ \ \mbox{for all} \ \ w \in K.
\end{equation*}
Next, we will prove that $\eta = u_{\infty h}$. To this end, let $v := w -\eta \in K_0$ with
arbitrary $w \in K$.
Hence, $L(v) \le a(\eta, v)$ for all $v \in K_0$.
Recalling that $v \in K_0$ implies $-v \in K_0$,
we obtain
$a(\eta, v) \le L(v)$ for all $v \in K_0$.
Hence, we conclude that
$$
\eta \in K \ \ \mbox{satisfies} \ \
a(\eta, v) = L(v)\ \ \mbox{for all} \ \ v \in K_0,
$$
i.e.,
$\eta \in K$ is a solution to problem (\ref{Pvariacional}).
By the uniqueness of solution to problem (\ref{Pvariacional}),
we have $\eta = u_{\infty h}$ and hence
$u_{\alpha g_{\alpha}} \rightharpoonup u_{\infty h}$ weakly in $V$,
as $\alpha \to \infty$.
\newline
Now
\[
J_{\alpha}(g_{\alpha})\leq J_{\alpha}(f), \quad \forall f \in H
\]
next
\begin{equation*}\begin{split}
J(h)& =\frac{1}{2}||u_{\infty h}-z_{d}||_{H}^{2}+\frac{M}{2}||h||_{H}^{2}= \frac{1}{2}||\eta -z_{d}||_{H}^{2}+\frac{M}{2}||h||_{H}^{2}\\
&\leq \liminf_{\alpha \to \infty}J_{\alpha}(g_{\alpha})\leq \liminf_{\alpha \to \infty}J_{\alpha}(f)\\ &
=\lim_{\alpha \to \infty}J_{\alpha}(f)=J(f), \quad \forall f\in H
\end{split}\end{equation*}
and from the uniqueness of the optimal control problem (\ref{OPVariational}), see \cite{GaTa}, we obtain that
\[
h=g^{*},
\]
therefore $u_{\infty h}=u_{\infty g^{*}}$. Next, we have that, when $\alpha \to \infty$
\begin{equation*}
g_{\alpha}\rightharpoonup g^{*} \ \ \text{weakly in} \ \ H \quad \text{and}\quad
u_{\alpha g_{\alpha}} \rightharpoonup u_{\infty g^{*}} \ \ \text{weakly in} \ \ V .
\end{equation*}
\newline
\textbf{Step 3.} Now, we prove the strong convergence
$u_{\alpha g_{\alpha}} \to u_{\infty g^{*}}$ in $V$, as $\alpha \to \infty$.
Choosing
$v = u_{\infty g^{*}}-u_{\alpha g_{\alpha}} \in V_0$ in problem (\ref{Pj0alfavariacional}),
we obtain
\begin{equation*}
a(u_{\alpha g_{\alpha}}, u_{\infty g^{*}} -u_{\alpha g_{\alpha}})
+ \alpha \int_{\Gamma_{3}}
j^0(u_{\alpha g_{\alpha}}; u_{\infty g^{*}}-u_{\alpha g_{\alpha}}) \, d\Gamma
\ge L(u_{\infty g^{*}}-u_{\alpha g_{\alpha}}).
\end{equation*}
Hence
\begin{equation*}\begin{split}
a(u_{\infty g^{*}} -u_{\alpha g_{\alpha}}, u_{\infty g^{*}} -u_{\alpha g_{\alpha}})
& \le a(u_{\infty g^{*}}, u_{\infty g^{*}} -u_{\alpha g_{\alpha}})
+ L(u_{\alpha g_{\alpha}} - u_{\infty g^{*}})
\\& + \alpha \int_{\Gamma_{3}}
j^0(u_{\alpha g_{\alpha}}; u_{\infty g^{*}}-u_{\alpha g_{\alpha}}) \, d\Gamma.
\end{split}\end{equation*}
Since $u_{\infty g^{*}}= b$ on $\Gamma_3$,
by $H(j)$(d) and the coerciveness of the form $a$,
we have
$$
m_a \, \| u_{\infty g^{*}} -u_{\alpha g_{\alpha}} \|^2_V \le
a(u_{\infty g^{*}}, u_{\infty g^{*}} -u_{\alpha g_{\alpha}})
+ L(u_{\alpha g_{\alpha}} - u_{\infty g^{*}}).
$$
Employing the weak continuity of $a(u_{\infty g^{*}}, \cdot)$, the compactness of the trace operator and taking into account that $u_{\alpha g_{\alpha}}\rightarrow u_{\infty g^{*}}$ strongly in $H$, we conclude that
$u_{\alpha g_{\alpha}} \to u_{\infty g^{*}}$ strongly in $V$, as $\alpha \to \infty$.
\newline
Finally, we prove the strong convergence of $g_{\alpha}$ to $g^{*}$ in $H$, when $\alpha\rightarrow \infty$. In fact, from  $u_{\alpha g_{\alpha}}\rightarrow u_{\infty g^{*}}$ strongly in $H$, we deduce
\begin{equation}\label{a}
\lim_{\alpha\rightarrow \infty}\frac{1}{2}||u_{\alpha g_{\alpha}}-z_{d}||_{H}^{2}=\frac{1}{2}||u_{\infty g^{*}}-z_{d}||_{H}^{2}
\end{equation}
and as $g_{\alpha}\rightharpoonup g^{*}$ weakly in $H$, then
\begin{equation}\label{b}
||g^{*}||_{H}^{2}\leq \liminf_{\alpha\rightarrow \infty}||g_{\alpha}||_{H}^{2}.
\end{equation}
Next, from (\ref{a}) and (\ref{b}), we obtain
\begin{equation*}
\frac{1}{2}||u_{\infty g^{*}}-z_{d}||_{H}^{2}+\frac{M}{2}||g^{*}||_{H}^{2}\leq \liminf_{\alpha\rightarrow \infty} \left(\frac{1}{2}||u_{\alpha g_{\alpha}}-z_{d}||_{H}^{2}+\frac{M}{2}||g_{\alpha}||_{H}^{2}\right),
\end{equation*}
that is
\begin{equation*}
J(g^{*})\leq \liminf_{\alpha\rightarrow \infty}J_{\alpha}(g_{\alpha}).
\end{equation*}
On the other hand, from the definition of $g_{\alpha}$, we have
\begin{equation*}
J_{\alpha}(g_{\alpha})\leq J_{\alpha}(g^{*})
\end{equation*}
then, taking into account that $u_{\alpha g^{*}}\rightarrow u_{\infty g^{*}}$ strongly in $H$, see~\cite[Theorem~7]{GMOT}, we obtain
\begin{equation*}
\limsup_{\alpha\rightarrow \infty}J_{\alpha}(g_{\alpha})\leq \limsup_{\alpha\rightarrow \infty}J_{\alpha}(g^{*})=J(g^{*})
\end{equation*}
and therefore
\begin{equation*}
\lim_{\alpha\rightarrow \infty}J_{\alpha}(g_{\alpha})=J(g^{*})
\end{equation*}
or equivalently
\begin{equation}\label{c}
\lim_{\alpha\rightarrow \infty}\left(\frac{1}{2}||u_{\alpha g_{\alpha}}-z_{d}||_{H}^{2}+\frac{M}{2}||g_{\alpha}||_{H}^{2}\right)=\frac{1}{2}||u_{\infty g^{*}}-z_{d}||_{H}^{2}+\frac{M}{2}||g^{*}||_{H}^{2}.
\end{equation}
Now, from (\ref{a}) and (\ref{c}), when $\alpha\rightarrow \infty$, we have
\begin{equation*}
||g_{\alpha}||_{H}^{2}\rightarrow ||g^{*}||_{H}^{2}
\end{equation*}
and as $g_{\alpha}\rightharpoonup g^{*}$ weakly in $H$, we deduce that $g_{\alpha}\rightarrow g^{*}$ strongly in $H$. This completes the proof.

\end{proof}

We remark that we can find examples of several locally Lipschitz functions $j$ which satisfies the hypothesis $H(j)$ and $(H_1)$ in ~\cite{GMOT}.

\section{Conclusions}\label{Conclusions}

We have studied a parameter optimal control problems for systems governed by elliptic boundary hemivariational inequalities with a non-monotone multivalued subdifferential boundary condition on a portion of the boundary of the domain which is described by the Clarke generalized gradient of a locally Lipschitz function. We prove an existence result for the optimal controls and we show an asymptotic result for the optimal controls and the system states, when the parameter (the heat transfer coefficient on a portion of the boundary) tends to infinity.
These results generalize for a locally Lipschitz function $j$, under the hypothesis $H(j)$ and $(H_1)$, the classical results obtained in ~\cite{GaTa} for a quadratic superpotential $j$.

\section*{Acknowledgements}

The present work has been partially sponsored by the European Union’s Horizon 2020 Research and Innovation Programme under the Marie Sklodowska-Curie grant agreement 823731 CONMECH and by the Project PIP No. 0275 from CONICET and Universidad Austral, Rosario, Argentina for the second author, and by the Project PPI No. 18/C555 from SECyT-UNRC, R\'io Cuarto, Argentina for the first author.



\begin{thebibliography}{1}
	
\bibitem{AK} {\sc{Azzam A. - Kreyszig E.}}, \emph{ On solutions of elliptic equations satisfying mixed boundary conditions}, SIAM J. Math. Anal. 13 (1982), 254-262.

\bibitem{BBP} {\sc{Bacuta C. - Bramble J.H. - Pasciak J.E.}}, \emph{Using finite element tools in proving shift theorems for elliptic boundary value problems}, Numer. Linear Algebra Appl., 10 (2003), 33-64.

\bibitem{Barbu}
{\sc Barbu V.},
{\it Boundary control problems with non linear state equation}, SIAM J. Control Optim., 20 (1982), 125-143.


\bibitem{CLM}
{\sc{Carl S. - Le V.K. - Motreanu D.}}, \emph{Nonsmooth Variational Problems and Their Inequalities},
Springer, New York (2007)

\bibitem{C} {\sc{Clarke F.H.}}, \emph{Optimization and Nonsmooth Analysis}, Wiley, Interscience, New York (1983).

\bibitem{DMP} {\sc{Denkowski Z. - Migorski S. - Papageorgiou N.S.}}, \emph{An Introduction to Nonlinear Analysis: Theory},
Kluwer Academic/Plenum, Boston (2003).


\bibitem{DL} {\sc{Duvaut G. - Lions J.L.}}, \emph{Les In\'equations en M\'ecanique et en Physique}, Dunod, Paris (1972).

\bibitem{GT} {\sc{Garguichevich G.G. - Tarzia D.A.}}, \emph{The steady-state two-fase Stefan problem with an internal energy and some related problems},
Atti Sem. Mat. Fis. Univ. Modena, 39 (1991), 615-634.

\bibitem{GMOT} {\sc{Gariboldi C. M. - Migorski S. - Ochal A. - Tarzia D.A.}}, \emph{Existence, comparison, and convergence results for a class of elliptic hemivariational inequalities},  Appl. Math. Optim., (2021), DOI: 10.1007/s00245-021-09800-9.

\bibitem{GaTa}  {\sc{Gariboldi C. M. - Tarzia D. A.}}, \emph{Convergence of distributed optimal controls on the internal energy in mixed elliptic problems when the heat transfer coefficient goes to infinity}, Appl. Math. Optim., 47 (2003), 213-230.

\bibitem{GT} {\sc{Gariboldi C. M. - Tarzia D.A.}}, \emph{Distributed optimal control problems for a class of elliptic hemivariational inequalities with a parameter and its asymptotic behavior}, Commun. Nonlinear Sci. Numer. Simul.-104 No.106027 (2021), 1-9.

\bibitem{G} {\sc{Grisvard P.}}, \emph{Elliptic Problems in Nonsmooth Domains}, Pitman, London, (1985).

\bibitem{LCB} {\sc{Lanzani L. - Capagna L. -Brown R.M.}}, \emph{The mixed problem in $L^{p}$ for some two-dimensional Lipschitz domain}, Math. Ann., 342 (2008), 91-124.

\bibitem{Li} {\sc{Lions J.L.}}, \emph{Contr\^{o}le optimal de syst\`{e}mes gouvern\'es par des \'equations
aux dérivées partielles}, Dunod, Paris, 1968.

\bibitem{MO} {\sc{Migorski S. - Ochal A.}}, \emph{Boundary hemivariational inequality of parabolic type}, Nonlinear Analysis, 57 (2004), 579-596.

\bibitem{unified}
{\sc{Migorski S. - Ochal A.}}, \emph{A unified approach to dynamic contact problems in viscoelasticity},
J. Elasticity, 83 (2006), 247-275.

\bibitem{ELAS}
{\sc{Migorski S. - Ochal A. - Sofonea M.}},
{\it A class of variational-hemivaria\-tio\-nal inequalities
in reflexive Banach spaces}, J. Elasticity, 127 (2017), 151-178.

\bibitem{MOS} {\sc{Migorski S. - Ochal A. - Sofonea M.}}, \emph{Nonlinear Inclusions and Hemivariational Inequalities. Models and Analysis of Contact Problems},
Springer, New York (2013).

\bibitem{NP} {\sc{Naniewicz Z. - Panagiotopoulos P.D.}}, \emph{Mathematical Theory of Hemivariational Inequalities and Applications}, Marcel Dekker, Inc., New York, (1995).

\bibitem{P} {\sc{Panagiotopoulos P.D.}}, \emph{Nonconvex problems of semipermeable media and related topics}, Z. Angew. Math. Mech., 65 (1985), 29-36.

\bibitem{P0}
{\sc{Panagiotopoulos P.D.}},
\emph{Inequality Problems in Mechanics and Applications}, Birkhäuser, Boston (1985).

\bibitem{P1} {\sc{Panagiotopoulos P.D.}}, \emph{Hemivariational Inequalities, Applications in Mechanics and Engineering}, Springer, Berlin (1993).

\bibitem{R} {\sc{Rodrigues J.F.}}, \emph{Obstacle Problems in Mathematical Physics}, North-Holland, Amsterdam (1987).


\bibitem{SM} {\sc{Sofonea M. - Migorski S.}}, \emph{Variational-Hemivariational Inequalities with Applications}, CRC Press, Boca Raton (2018).
	
\bibitem{TT} {\sc{Tabacman E.D. - Tarzia D.A.}}, \emph{Sufficient and/or necessary condition for the heat transfer coefficient on} $\Gamma_1$ \emph{and the heat flux on} $\Gamma_2$ \emph{to obtain a steady-state two-phase Stefan problem}, J. Differential Equations, 77 (1989), 16-37.

\bibitem{Ta} {\sc{Tarzia D.A.}}, \emph{Sur le probl\`eme de Stefan \`a deux phases}, C. R. Acad. Sci. Paris Ser. A, 288 (1979), 941-944.

\bibitem{Ta2} {\sc{Tarzia D.A.}}, \emph{Una familia de problemas que converge hacia el caso estacionario del problema de Stefan a dos fases}, Mathematicae Notae, 27 (1979/80), 157-165.

\bibitem{Ta3} {\sc{Tarzia D.A.}}, \emph{An inequality for the constant heat flux to obtain a steady-state two-phase Stefan problem}, Eng. Anal., 5 (4) (1988), 177-181.

\bibitem{Tr} {\sc{Tr\"{o}lstzsch F.}}, \emph{Optimal control of partial differential equations. Theory, methods and applications},  American Math. Soc., Providence, 2010.

\bibitem{ZLM} {\sc{Zeng B. - Liu Z. - Migorski S.}},
\emph{On convergence of solutions to variational-hemivariational inequalities},
Z. angew. Math. Phys., 69 (87) (2018), 1-20.
		
\end{thebibliography}
\end{document}